\renewcommand*\subjclass[2][2010]{%
  \def\@subjclass{#2}%
  \@ifundefined{subjclassname@#1}{%
    \ClassWarning{\@classname}{Unknown edition (#1) of Mathematics
      Subject Classification; using '2010'.}%
  }{%
    \@xp\let\@xp\subjclassname\csname subjclassname@#1\endcsname
  }%
}
\newtheorem{theorem}{Theorem}[section]
\newtheorem{lemma}[theorem]{Lemma}
\theoremstyle{definition}
\renewcommand*\subjclass[2][2010]{%
  \def\@subjclass{#2}%
  \@ifundefined{subjclassname@#1}{%
    \ClassWarning{\@classname}{Unknown edition (#1) of Mathematics
      Subject Classification; using '1991'.}%
  }{%
    \@xp\let\@xp\subjclassname\csname subjclassname@#1\endcsname
  }%
}
\begin{document}

\textwidth137mm
\textheight225mm
\hoffset1mm
\voffset-15mm

\title[An Extension of a Congruence by Kohnen]
{An Extension of a Congruence by Kohnen}

\author{Romeo Me\v strovi\' c}
\address{Department of Mathematics,
Maritime Faculty, University of Montenegro, 
Dobrota 36, 85330 Kotor, Montenegro 
romeo@ac.me}

\begin{abstract}
Let $p>3$ be a prime, and let  $q_p(2)=(2^{p-1}-1)/p$ be 
the Fermat quotient of $p$ to base 2. 
Recently, Z. H. Sun proved that
  \begin{equation*}
\sum_{k=1}^{p-1}\frac{1}{k\cdot 2^k}\equiv q_p(2)-\frac{p}{2}q_p(2)^2
\pmod{p^2}
   \end{equation*}
which is a generalization of a congruence due to W. Kohnen.
In this note we give an elementary proof of the above
congruence which is based on 
several combinatorial identities 
and  congruences involving the Fermat quotient $q_p(2)$, 
harmonic or alternating harmonic sums. 
  \end{abstract}

  \maketitle
{\renewcommand{\thefootnote}{}\footnote{2010 {\it Mathematics Subject 
Classification.} Primary 11B75, 11A07;  Secondary 11B65,  05A10.

{\it Keywords and phrases.}  congruence, Fermat quotient,
harmonic number}
\setcounter{footnote}{0}}

\section{Introduction and Main Result}

Using a polynomial method, W. Kohnen \cite[Theorem]{k} 
proved that for any odd prime $p$,
     \begin{equation}\label{con1}
\sum_{k=1}^{p-1}\frac{1}{k\cdot 2^k}\equiv
\sum_{k=1}^{(p-1)/2}\frac{(-1)^{k-1}}{k}\pmod{p}.
    \end{equation}
Here, as usually in the sequel, we consider the congruence relation 
modulo a prime power $p^e$ extended to the ring of rational numbers
with denominators not divisible by $p$. 
For such fractions we put $m/n\equiv r/s \,(\bmod{\,\,p^e})$ 
if and only if $ms\equiv nr\,(\bmod{\,\,p^e})$, and the residue
class of $m/n$ is the residue class of $mn'$ where 
$n'$ is the inverse of $n$ modulo $p^e$.  

In the proof of the above congruence Kohnen \cite[the congruence (3) and 
the congruence after this]{k} showed that
   \begin{equation}\label{con2}
\sum_{k=1}^{p-1}\frac{1}{k\cdot 2^k}\equiv
-\frac{1}{2}\sum_{k=1}^{p-1}\frac{2^k}{k}\equiv
-\frac{1}{2}\sum_{k=1}^{p-1}\frac{(-1)^{k}}{k}\pmod{p}.
    \end{equation}
Now the congruence (\ref{con1}) immediately follows 
 from  (\ref{con2}) and the fact that the sum on 
the left  of (\ref{con2}) can be rewrite as
 $$
\sum_{k=1}^{(p-1)/2}\frac{(-1)^{k}}{k}
+\sum_{k=1}^{(p-1)/2}\frac{(-1)^{p-k}}{p-k}
\equiv 2\sum_{k=1}^{(p-1)/2}\frac{(-1)^{k}}{k}\pmod{p}.
  $$ 
We point also that Z. W. Sun proved in \cite{s2'} that 
for any odd prime $p$,
 \begin{equation}\label{con1'}
\sum_{k=1}^{(p-1)/2}\frac{1}{k\cdot 2^k}\equiv
\sum_{k=1}^{\left[3p/4\right]}\frac{(-1)^{k-1}}{k}\pmod{p},
    \end{equation}
where $[a]$ denotes the integer part of a real number $a$.

The congruence (\ref{con1'}) with the bound $\left[p/2^n\right]$, 
$n=1,2,\ldots$, instead of $(p-1)/2$ in the sum on the right 
hand side of (\ref{con1'}) was generalized by 
W. Kohnen \cite[Theorem]{k2}.

The congruences (\ref{con1}) and (\ref{con2}) may be very interesting if 
we observe their connection with the Fermat quotient.
The Fermat Little Theorem states that if $p$ is a 
prime and $a$ is an integer not divisible by $p$,
then $a^{p-1}\equiv 1\,(\bmod{\,\,p})$. This gives rise
to the definition of the {\it Fermat quotient of $p$ to base $a$},
    $$
q_p(a):=\frac{a^{p-1}-1}{p},
    $$
which is an integer. 
It is well known that divisibility of Fermat quotient $q_p(a)$ by $p$
has numerous applications which include the Fermat Last Theorem and
squarefreeness testing (see \cite{em}, \cite{gr1} and \cite{r}). 
 
A particular interesting one, due to Glaisher (\cite{gl}; also
see \cite{gr2})
for a prime $p\ge 3$, is
  \begin{equation*}
\sum_{k=1}^{p-1}\frac{2^k}{k}\equiv 
-2q_p(2)\pmod{p}.
  \end{equation*}

Recently,  Z. H. Sun  \cite{s2} established the following extension
of  the congruence (\ref{con1}).

\vspace{2mm}

\vspace{3mm} \noindent {\bf Theorem.}
(\cite[Theorem 4.1(iii)]{s2}.) {\it Let $p\ge 5$ be a prime. Then}
 \begin{equation}\label{con3}
 \sum_{k=1}^{p-1}\frac{1}{k\cdot 2^k}\equiv q_p(2)-\frac{p}{2}q_p(2)^2
\pmod{p^2}.
 \end{equation}

 Sun's proof \cite[Lemmas 4.1-4.3]{s2} of the congruence (\ref{con3})
is based on the congruential properties of Mirimanoff polynomials
obtained by "the anti-derivative method". In his proof 
it was also used the congruence for the sum 
$\sum_{k=1}^{(p-1)/2}1/k\,(\bmod{\,\, p^3})$
obtained in \cite[Theorem 5.2 (c)]{s1} 
whose proof is deduced by a standard tecnique for determining
power sums  $\sum_{k=1}^{(p-1)/2}k^r$ ($r=1,2,\ldots$) in terms 
of Bernoulli numbers. Our proof of the Theorem given in the next section
is entirely elementary and it is based 
on some combinatorial identities, numerous classical
and new  congruences involving the Fermat quotient $q_p(2)$, 
harmonic and alternating harmonic sums. 
These auxiliary congruences are interesting in themselves, such as
  \begin{equation*}
\sum_{k=1}^{p-1}\frac{(-1)^kH_{k-1}}{k}\equiv
2\sum_{1\le i<j\le p-1\atop  j\, even}\frac{1}{ij}\equiv
q_p(2)^2\pmod{p}.
   \end{equation*}
Furthermore, notice that Sun's method \cite{s2'} and 
Kohnen's method \cite{k2} may be applied 
to extend the congruence (\ref{con1'}) modulo $p^2$.
Both these congruences involve harmonic and alternating harmonic type 
sums.

\vspace{1mm}
\noindent {\bf Remarks.} 
Quite recently, Z. W. Sun 
\cite[Proof of Theorem 1.1, the congruence
after (2.3)]{s4} noticed that by a result of Z. H. Sun 
\cite[Corollary 3.3]{s2}, 
   \begin{equation}\label{con4'}
\sum_{k=1}^{(p-1)/2}\frac{(-1)^{k-1}}{k}\equiv q_p(2)
-\frac{p}{2}q_p(2)^2-(-1)^{(p+1)/2}pE_{p-3}\pmod{p^2},
    \end{equation}
where $E_n$ $(n=0,1,2,\ldots)$  are {\it Euler numbers},
that is, integers defined recursively by
   $$
E_0=1, \quad \mathrm{and}\quad\sum_{0\le k\le n\atop k\,\, even}
{n\choose k}E_{n-k} \quad \mathrm{for}\quad n=1,2,3,\ldots
   $$
(it is well known  that $E_{2n-1}=0$ for each $n=1,2,\ldots$).

Comparing (\ref{con3}) and  (\ref{con4'}), we have 
 $$
\sum_{k=1}^{(p-1)/2}\frac{(-1)^{k-1}}{k}\equiv 
\sum_{k=1}^{p-1}\frac{1}{k\cdot 2^k}-(-1)^{(p+1)/2}pE_{p-3}\pmod{p^2},
  $$
whence we conclude that  the congruence (\ref{con4'}) can
be considered as another generalization of the congruence 
(\ref{con1}). 

Notice that numerous combinatorial  congruences
recently obtained by Z. W. Sun in \cite{s8}--\cite{s7} 
and by Z. H. Sun in \cite{s2} 
contain the Euler numbers $E_{p-3}$ with a prime $p$. 
Namely, many of these congruences become "supercongruences"
if and only if $E_{p-3}\equiv 0\,(\bmod\,p)$.
Using the  congruence (\ref{con4'}), a
 computation via {\tt Mathematica 8} shows that only  
three primes less than $3\cdot 10^6$ 
 satisfy the condition $E_{p-3}\equiv 0\,(\bmod\,p)$
(such primes are 149, 241 and 2946901).
Recall that investigations of such primes 
have been recently suggested by Z. W. Sun in \cite{s4}; namely, 
in \cite[Remark 1.1]{s4} Sun found the first and the second such primes,
149 and 241, and used them to discover curious 
supercongruences (1.2)--(1.5) from Theorem 1.1 in \cite{s4} involving 
$E_{p-3}$. 
 
By statistical considerations (cf. \cite[p. 447]{cd} and \cite{mr}  
in relation to search  for Wieferich and Fibonacci-Wieferich 
and Wolstenholme primes, respectively), 
in an interval $[x,y]$, there are expected to be 
       $$
\sum_{x\le p\le y}\frac{1}{p}\approx \log\frac{\log y}{\log x}
      $$
primes satisfying $E_{p-3}\equiv 0\,(\bmod\,p)$. 
In particular, it follows that in the interval 
$[3\cdot 10^6,10^{18}]$ we can expect about $1.0221$  such primes. 
Also notice that in accordance to the above estimation, 
in the interval $[2,3\cdot 10^{6}]$ we can expect about $3.06882$
primes $p$ such that $E_{p-3}\equiv 0\,(\bmod\,p)$; as noticed previously, 
our computation shows that  all these primes are 149, 241 and 2946901.

Recall that a prime $p$ is said to be a {\it Wolstenholme prime} if it 
satisfies the congruence 
 $$
{2p-1\choose p-1} \equiv 1 \pmod{p^4},
 $$
or equivalently (cf. \cite[Corollary on page 386]{mc}; also 
see \cite{gl}) that $p$ divides the numerator of $B_{p-3}$.
The only two known such primes are 16843 and 2124679, and 
by a recent result of McIntosh and Roettger from \cite[pp. 2092--2093]{mr}, 
these primes are the only two  Wolstenholme primes less than $10^9$.
Nevertheless, by using an argument based on the prime number theorem, 
McIntosh  \cite[page 387]{mc} conjectured that there are infinitely many 
Wolstenholme primes. Since in  accordance to the 
our investigations of $E_{p-3}\equiv 0\,(\bmod\,p)$ up to $p<3\cdot 10^6$,
we can assume that the remainder modulo $p$ of $E_{p-3}$ is random.
Then applying the previous mentioned  McIntosh's argument 
we propose the following 
 
\vspace{1mm}
\noindent {\bf Conjecture.} There are infinitely many primes 
$p$ such that $E_{p-3}\equiv 0\,(\bmod\,p)$.

\section{Proof of the Theorem}

For a nonnegative integer $n$ let 
  $$
H_n=1+\frac{1}{2}+\cdots +\frac{1}{n}
  $$
be the $n$th {\it harmonic number}  (we assume that $H_0=0$).

We begin with well known result.

\begin{lemma}\label{l2.1} {\rm (\cite[Lemma 2.1]{s5}).}
 If $p$ is an odd prime, then
 \begin{equation}\label{con5}
{p-1\choose k}\equiv (-1)^k-(-1)^kpH_{k}+(-1)^kp^2
\sum_{1\le i<j\le k}\frac{1}{ij}\pmod{p^3}
   \end{equation}
for each $k=1,2,\ldots,p-1$.
\end{lemma}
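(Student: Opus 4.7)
The plan is to prove the congruence by expanding $\binom{p-1}{k}$ directly as a product and collecting powers of $p$. First I would write
$$\binom{p-1}{k}=\prod_{j=1}^{k}\frac{p-j}{j}=(-1)^{k}\prod_{j=1}^{k}\Bigl(1-\frac{p}{j}\Bigr),$$
which is a legitimate rational identity; it also makes sense as a congruence modulo $p^{3}$, because the denominators $1,2,\dots,k$ are invertible modulo $p$ (we use $k\le p-1<p$ together with the convention for congruences between rational numbers recalled in Section~1).

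Next I would expand the product on the right using the elementary symmetric polynomials $e_{r}$ in the $k$ rational numbers $1,1/2,\dots,1/k$, giving
$$\prod_{j=1}^{k}\Bigl(1-\frac{p}{j}\Bigr)=\sum_{r=0}^{k}(-p)^{r}\,e_{r}(1,1/2,\dots,1/k).$$
Every term with $r\ge 3$ carries a factor $p^{3}$ and so vanishes modulo $p^{3}$. The three surviving terms are $e_{0}=1$, $e_{1}=H_{k}$, and $e_{2}=\sum_{1\le i<j\le k}1/(ij)$, which together produce
$$1-pH_{k}+p^{2}\sum_{1\le i<j\le k}\frac{1}{ij}\pmod{p^{3}}.$$
Multiplying by $(-1)^{k}$ yields exactly the claimed congruence \eqref{con5}.

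Because the argument is a plain binomial expansion, there is no genuine obstacle; the only point requiring a brief check is the invertibility of the denominators $j\le k<p$ modulo $p^{3}$, which is automatic. Since the statement is standard and is quoted from \cite{s5}, in the write-up I would either just cite it or record the three-line derivation above.
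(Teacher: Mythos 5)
Your proposal is correct and is essentially identical to the paper's own proof: both write $\binom{p-1}{k}=(-1)^k\prod_{i=1}^{k}\bigl(1-\tfrac{p}{i}\bigr)$ and expand the product modulo $p^3$, keeping the constant, linear, and quadratic terms in $p$. Nothing further is needed.
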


\begin{proof} 
For a fixed $1\le k\le p-1$ we have
   \begin{eqnarray*}
(-1)^k{p-1\choose k}&=&\prod_{i=1}^k\left(1-\frac{p}{i}\right)
\equiv 1-\sum_{i=1}^k\frac{p}{i}+\sum_{1\le i<j\le k}\frac{p^2}{ij}\pmod{p^3}\\
&=&1-pH_k+p^2\sum_{1\le i<j\le k}\frac{1}{ij}\pmod{p^3},
   \end{eqnarray*}
which is actually  the congruence (\ref{con5}). 
\end{proof}

The following congruences are well  known 
(e.g., see \cite[Proof of Corollary 1.2]{s3}).

\begin{lemma}\label{l2.2}
 Let $p\ge 5$ be a prime. Then 
 \begin{equation}\label{con6}\begin{split}
q_p(2)&\equiv \frac 12\sum_{k=1}^{p-1}\frac{(-1)^{k-1}}{k}\equiv
-\sum_{1\le i\le p-1\atop i\, even}\frac{1}{i}=-\frac 12 H_{(p-1)/2}\\
&\equiv \sum_{1\le i\le p-1\atop i\, odd}\frac{1}{i}
\pmod{p}.
 \end{split} \end{equation}
\end{lemma}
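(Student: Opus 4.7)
The plan is to extract everything from the binomial expansion of $2^p$ together with Lemma~\ref{l2.1} and a splitting of the sum by parity.

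First I would expand $2^p=(1+1)^p=\sum_{k=0}^{p}\binom{p}{k}$, isolating the boundary terms to write
\[
2(2^{p-1}-1)=\sum_{k=1}^{p-1}\binom{p}{k}=p\sum_{k=1}^{p-1}\frac{1}{k}\binom{p-1}{k-1}.
\]
By Lemma~\ref{l2.1} (applied with index $k-1$), $\binom{p-1}{k-1}\equiv(-1)^{k-1}\pmod{p}$, so after dividing by $p$ one obtains
\[
2q_p(2)\equiv \sum_{k=1}^{p-1}\frac{(-1)^{k-1}}{k}\pmod{p},
\]
which is the first asserted congruence.

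Next I would split that alternating sum according to the parity of $k$:
\[
\sum_{k=1}^{p-1}\frac{(-1)^{k-1}}{k}=\sum_{\substack{1\le k\le p-1\\ k\,\mathrm{odd}}}\frac{1}{k}-\sum_{\substack{1\le k\le p-1\\ k\,\mathrm{even}}}\frac{1}{k}.
\]
To link the odd and even pieces I need $H_{p-1}\equiv 0\pmod{p}$; this follows quickly from Lemma~\ref{l2.1} with $k=p-1$ (the identity $\binom{p-1}{p-1}=1$ forces $pH_{p-1}\equiv 0\pmod{p^2}$). Consequently the odd-indexed and even-indexed partial harmonic sums are negatives of each other modulo $p$, so each of them is congruent to half the alternating sum:
\[
\sum_{\substack{k\,\mathrm{odd}}}\frac{1}{k}\equiv -\sum_{\substack{k\,\mathrm{even}}}\frac{1}{k}\equiv q_p(2)\pmod{p}.
\]

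Finally I would rewrite the even part as a harmonic number by factoring out $2$, namely
\[
\sum_{\substack{1\le i\le p-1\\ i\,\mathrm{even}}}\frac{1}{i}=\sum_{j=1}^{(p-1)/2}\frac{1}{2j}=\frac{1}{2}H_{(p-1)/2},
\]
which supplies the middle equality in (\ref{con6}) and completes the chain of congruences.

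There is no genuine obstacle here; the only point requiring a little care is making sure the binomial-expansion step is kept honest modulo $p$ (one only needs the mod-$p$ approximation $\binom{p-1}{k-1}\equiv(-1)^{k-1}$, not the full mod-$p^3$ refinement of Lemma~\ref{l2.1}), and invoking $H_{p-1}\equiv 0\pmod p$ at exactly the right moment to merge the odd and even sums.
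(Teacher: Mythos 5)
Your proposal is correct and follows essentially the same route as the paper: the binomial expansion of $2^p$ combined with $\binom{p}{k}=\frac{p}{k}\binom{p-1}{k-1}$ and Lemma~\ref{l2.1} reduced mod $p$, then a parity split of the alternating harmonic sum using $H_{p-1}\equiv 0\pmod p$. The only (harmless) difference is that you extract $H_{p-1}\equiv 0\pmod p$ from Lemma~\ref{l2.1} at $k=p-1$, whereas the paper simply cites Wolstenholme's theorem.
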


\begin{proof} Applying the binomial formula, using the identity 
$\frac{1}{n}{n\choose k}=\frac{1}{k}{n-1\choose k-1}$
and the congruence (\ref{con5}) reduced  modulo $p$, we find that
 \begin{equation}\label{con7}\begin{split}
2q_p(2)&=\frac{2^p-2}{p}=\frac{(1+1)^p-2}{p}=
\frac{\sum_{k=1}^{p-1}{p\choose k}}{p}\\
&=\sum_{k=1}^{p-1}\frac{1}{k}{p-1\choose k-1}\equiv 
\sum_{k=1}^{p-1}\frac{(-1)^{k-1}}{k}\pmod{p}.
  \end{split}\end{equation}
By  {\it Wolstenholme's theorem} (\cite{w}; also see \cite[Theorem 1]{a}
or \cite{gr3}), if $p$ is a prime greater than 3, then the numerator of the 
fraction 
$$
H_{p-1}=1+\frac{1}{2}+\frac{1}{3}+\cdots+\frac{1}{p-1}
 $$ is 
divisible by $p^2$. This together with the  congruence (\ref{con7})
gives 
   \begin{eqnarray*}
2q_p(2) &\equiv&  \sum_{k=1}^{p-1}\frac{(-1)^{k-1}}{k}=
\sum_{k=1}^{p-1}\frac{1}{k}-2\sum_{1\le i\le p-1\atop i\, even}\frac{1}{i}\\
&\equiv& -2\sum_{1\le i\le p-1\atop i\, even}\frac{1}{i}=
-H_{(p-1)/2}\pmod{p}. 
  \end{eqnarray*}
Analogously, we obtain the third  congruence from (\ref{con6}).
\end{proof}

\begin{lemma}\label{l2.3}
 Let $p\ge 5$ be a prime. Then 
  \begin{equation}\label{con8}
\sum_{k=1}^{p-1}\frac{1}{k^2}\equiv 0\pmod{p}
  \end{equation}
and 
  \begin{equation}\label{con9}
\sum_{k=1}^{(p-1)/2}\frac{1}{k^2}\equiv 0\pmod{p}.
  \end{equation}
\end{lemma}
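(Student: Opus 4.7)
The plan is to prove the two congruences by completely elementary means, relying only on the fact that the nonzero residues modulo $p$ form a group under multiplication and on a simple pairing $k\leftrightarrow p-k$.

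For (\ref{con8}), I would observe that as $k$ runs through $\{1,2,\dots,p-1\}$, so does its multiplicative inverse $k^{-1}$ modulo $p$; hence
\begin{equation*}
\sum_{k=1}^{p-1}\frac{1}{k^2}\equiv \sum_{k=1}^{p-1}k^2=\frac{(p-1)p(2p-1)}{6}\pmod{p}.
\end{equation*}
Since $p\ge 5$, the denominator $6$ is invertible modulo $p$, and the explicit factor $p$ in the numerator shows that the right hand side is divisible by $p$. This establishes (\ref{con8}).

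For (\ref{con9}), I would split the full sum at its midpoint and apply the substitution $k\mapsto p-k$ in the upper half:
\begin{equation*}
\sum_{k=1}^{p-1}\frac{1}{k^2}=\sum_{k=1}^{(p-1)/2}\frac{1}{k^2}+\sum_{k=(p+1)/2}^{p-1}\frac{1}{k^2}
=\sum_{k=1}^{(p-1)/2}\frac{1}{k^2}+\sum_{j=1}^{(p-1)/2}\frac{1}{(p-j)^2}.
\end{equation*}
Since $(p-j)^2\equiv j^2\pmod{p}$, the second sum is congruent to the first modulo $p$, so
\begin{equation*}
\sum_{k=1}^{p-1}\frac{1}{k^2}\equiv 2\sum_{k=1}^{(p-1)/2}\frac{1}{k^2}\pmod{p}.
\end{equation*}
Combining this with (\ref{con8}) just proved and using that $2$ is invertible modulo the odd prime $p$ yields (\ref{con9}).

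There is no real obstacle here; both congruences are standard consequences of elementary manipulations, and the hypothesis $p\ge 5$ is used only to guarantee that $2,3,$ and $6$ are invertible modulo $p$. The key ideas are simply (i) symmetry of the set of invertible residues, and (ii) the $k\leftrightarrow p-k$ reflection, both of which reappear throughout the rest of the paper.
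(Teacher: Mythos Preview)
Your proof is correct and follows essentially the same route as the paper: both arguments for (\ref{con9}) use the reflection $k\leftrightarrow p-k$ to show the half-sum is half of the full sum modulo $p$. The only difference is that for (\ref{con8}) the paper simply cites Bayat's result, whereas you supply the standard self-contained computation $\sum_{k=1}^{p-1}1/k^2\equiv\sum_{k=1}^{p-1}k^2=(p-1)p(2p-1)/6$; this is a harmless improvement in self-containedness rather than a change of strategy.
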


\begin{proof}
 By a result of Bayat \cite[Theorem 3 (ii)]{b}, for any prime $p\ge 5$
the numerator of the fraction 
$1+\frac{1}{2^2}+\frac{1}{3^2}+\cdots+\frac{1}{(p-1)^2}$ is 
divisible by $p$, which is the congruence (\ref{con8}). 

Notice that the set of all quadratic residues modulo $p$ is actually 
the set $\{1^2,2^2,\cdots, ((p-1)/2)^2\}$. 
Since $i^2\equiv (p-i)^2 \,(\bmod{\,\, p})$ for each 
$i=1,\ldots,(p-1)/2$, it follows that regarding  modulo $p$
this set coincides with the set $\{((p+1)/2)^2,((p+3)/2)^2,\cdots, 
(p-1)^2\}$, and so by the mentioned result of Bayat, we have  
   \begin{equation*}
\sum_{k=1}^{(p-1)/2}\frac{1}{k^2}
\equiv \frac 12\sum_{k=1}^{p-1}\frac{1}{k^2}
\equiv 0\pmod{p}.
   \end{equation*}
This is (\ref{con9}) and the proof is completed.
\end{proof}

\begin{lemma}\label{l2.4}
 Let $n$ be a positive integer. Then
 \begin{equation}\label{con10}
\sum_{k=1}^{2n}(-1)^kH_k=\frac{1}{2}H_{n},
   \end{equation}
    \begin{equation}\label{con11}
\sum_{k=2}^{2n}\sum_{1\le i<j\le k}\frac{(-1)^k}{ij}
=\sum_{1\le i<j\le 2n\atop  j\, even}\frac{1}{ij}
    \end{equation}
and
  \begin{equation}\label{con12}
\sum_{k=2}^{2n}\frac{(-1)^kH_{k-1}}{k}
=2\sum_{1\le i<j\le 2n\atop  j\, even}\frac{1}{ij}-
\sum_{1\le i<j\le 2n}\frac{1}{ij}.
    \end{equation}
\end{lemma}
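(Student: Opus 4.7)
The plan is to prove each identity essentially by rearranging the order of summation and exploiting the parity of $(-1)^k$.

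For (\ref{con10}) my plan is to pair consecutive terms. Using $H_{2m}-H_{2m-1}=\frac{1}{2m}$, the pair with indices $k=2m-1$ and $k=2m$ contributes
$$-H_{2m-1}+H_{2m}=\frac{1}{2m},$$
so that summing over $m=1,\dots,n$ gives $\sum_{m=1}^{n}\frac{1}{2m}=\frac{1}{2}H_n$. This is routine.

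For (\ref{con11}) I would swap the order of summation: the pair $(i,j)$ with $1\le i<j\le 2n$ appears in the inner sum whenever $j\le k\le 2n$. Hence
$$\sum_{k=2}^{2n}\sum_{1\le i<j\le k}\frac{(-1)^k}{ij}=\sum_{1\le i<j\le 2n}\frac{1}{ij}\sum_{k=j}^{2n}(-1)^k.$$
The inner geometric-type sum $\sum_{k=j}^{2n}(-1)^k$ equals $1$ if $j$ is even (odd number of terms starting with $+1$) and $0$ if $j$ is odd (even number of alternating $\pm 1$'s). This immediately collapses the outer sum to $\sum_{1\le i<j\le 2n,\,j\,\mathrm{even}}\frac{1}{ij}$.

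For (\ref{con12}) I would expand $H_{k-1}=\sum_{i=1}^{k-1}\frac{1}{i}$ and swap order, renaming $k$ as $j$:
$$\sum_{k=2}^{2n}\frac{(-1)^kH_{k-1}}{k}=\sum_{1\le i<j\le 2n}\frac{(-1)^j}{ij}.$$
Splitting the right-hand side according to the parity of $j$ and writing
$$\sum_{1\le i<j\le 2n}\frac{(-1)^j}{ij}=\sum_{\substack{1\le i<j\le 2n\\ j\,\mathrm{even}}}\frac{1}{ij}-\sum_{\substack{1\le i<j\le 2n\\ j\,\mathrm{odd}}}\frac{1}{ij}=2\sum_{\substack{1\le i<j\le 2n\\ j\,\mathrm{even}}}\frac{1}{ij}-\sum_{1\le i<j\le 2n}\frac{1}{ij}$$
finishes the proof. (Alternatively, one could derive (\ref{con12}) by combining (\ref{con11}) with the identity $(-1)^j=2\cdot\mathbf{1}_{j\,\mathrm{even}}-1$.)

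There is no real obstacle here; every step is a bookkeeping exchange of summation order combined with the trivial evaluation $\sum_{k=j}^{2n}(-1)^k\in\{0,1\}$. The only mild pitfall is being careful with the parity of the number of terms in the inner $(-1)^k$ sum and with the starting index $k=2$ in (\ref{con12}), which accounts precisely for the condition $i<j$ with $j\ge 2$.
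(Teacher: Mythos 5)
Your proof is correct and takes essentially the same route as the paper: for (\ref{con11}) you swap the order of summation and evaluate $\sum_{k=j}^{2n}(-1)^k$ as $1$ or $0$ according to the parity of $j$, and for (\ref{con12}) you expand $H_{k-1}$, reindex, and split by the parity of $j$, exactly as in the paper's argument. For (\ref{con10}) the paper simply asserts the identity ``by induction'' and omits the details; your explicit pairing of the terms $k=2m-1$ and $k=2m$ is a clean and complete substitute.
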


\begin{proof} The identity (\ref{con10}) easily follows by induction on $n$,
and hence its proof may be omitted. 

In order to prove the  equality  (\ref{con11}), observe that 
for fixed $i$, $j$ with $1<j\le 2n$ the sum of all terms
on the left of (\ref{con11}) containing $1/(ij)$ is equal 
to
     $$ 
  \frac{1}{ij}\sum_{k=j}^{2n}(-1)^k=
\left\{\begin{array}{ll}
0 &\mathrm{if\,\,} j \mathrm{\,\, is\,\, odd}\\
1 &\mathrm{if\,\,} j \mathrm{\,\, is\,\, even}.
\end{array}\right.
   $$
This immediately yields (\ref{con11}).

The equality in (\ref{con12}) is satisfied as follows. 
  \begin{eqnarray*}
\sum_{k=1}^{2n}\frac{(-1)^kH_{k-1}}{k}&=&
\sum_{k=2}^{2n}\frac{(-1)^k}{k}\sum_{i=1}^{k-1}\frac{1}{i}=
\sum_{1\le i<j\le 2n\atop  j\, even}\frac{1}{ij}-
\sum_{1\le i<j\le 2n\atop  j\, odd}\frac{1}{ij}\\
&=&\sum_{1\le i<j\le 2n\atop  j\, even}\frac{1}{ij}-
\left(\sum_{1\le i<j\le 2n}\frac{1}{ij}-\sum_{1\le i<j\le 2n\atop  j\, even}
\frac{1}{ij}  \right)\\
&=&2\sum_{1\le i<j\le 2n\atop  j\, even}\frac{1}{ij}-
\sum_{1\le i<j\le 2n}\frac{1}{ij}.
  \end{eqnarray*}
This completes the  proof.
\end{proof}

\begin{lemma}\label{l2.5}
 Let $p\ge 5$ be a prime. Then 
 \begin{equation}\label{con13}
q_p(2)^2 \equiv 
 2\sum_{1\le i<j\le p-1\atop i\, even,\, j\, even}\frac{1}{ij}
\equiv  2\sum_{1\le i<j\le p-1\atop i\, odd,\, j\, odd}\frac{1}{ij}\pmod{p}.
 \end{equation}
\end{lemma}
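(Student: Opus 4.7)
The plan is to square the congruences of Lemma \ref{l2.2} and reduce the resulting ``diagonal'' terms using Lemma \ref{l2.3}. Concretely, Lemma \ref{l2.2} gives
$$
q_p(2)\equiv -\sum_{1\le i\le p-1\atop i\,\text{even}}\frac{1}{i}\equiv \sum_{1\le i\le p-1\atop i\,\text{odd}}\frac{1}{i}\pmod{p}.
$$
Squaring each of these two expressions and using the elementary identity $\bigl(\sum_{a\in S}1/a\bigr)^2=\sum_{a\in S}1/a^2+2\sum_{a<b\in S}1/(ab)$, I obtain
$$
q_p(2)^2\equiv \sum_{1\le i\le p-1\atop i\,\text{even}}\frac{1}{i^2}+2\sum_{1\le i<j\le p-1\atop i\,\text{even},\,j\,\text{even}}\frac{1}{ij}\pmod{p},
$$
and the analogous identity with ``odd'' replacing ``even''.

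Next I would dispose of the diagonal terms. For the even case, the substitution $i=2k$ gives
$$
\sum_{1\le i\le p-1\atop i\,\text{even}}\frac{1}{i^2}=\frac{1}{4}\sum_{k=1}^{(p-1)/2}\frac{1}{k^2},
$$
which is $\equiv 0\pmod{p}$ by (\ref{con9}). For the odd case, I would write
$$
\sum_{1\le i\le p-1\atop i\,\text{odd}}\frac{1}{i^2}=\sum_{i=1}^{p-1}\frac{1}{i^2}-\sum_{1\le i\le p-1\atop i\,\text{even}}\frac{1}{i^2},
$$
and both sums on the right vanish modulo $p$ by (\ref{con8}) and the computation just made. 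This yields both claimed congruences of (\ref{con13}) simultaneously.

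There is no genuine obstacle here; the only point that requires a touch of care is to make sure the step $\sum_{i\,\text{even}}1/i^2\equiv 0\pmod p$ is really available from Lemma \ref{l2.3} (it is, via the factor-of-$4$ rescaling above), so that the full argument rests on already-proved facts.
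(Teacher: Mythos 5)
Your proof is correct, and for the first congruence it coincides with the paper's: square $q_p(2)\equiv-\sum_{i\,\mathrm{even}}1/i$ from Lemma \ref{l2.2} and kill the diagonal term $\sum_{i\,\mathrm{even}}1/i^2=\frac14\sum_{k=1}^{(p-1)/2}1/k^2\equiv 0$ via (\ref{con9}). The only divergence is in the second congruence: the paper deduces the odd--odd double sum from the even--even one by the reflection $(i,j)\mapsto(p-j,p-i)$, which sends pairs of even indices to pairs of odd indices and changes each factor only by a sign that cancels in the product, whereas you instead square the third congruence of (\ref{con6}) directly and dispose of $\sum_{i\,\mathrm{odd}}1/i^2$ by subtracting the even part from the full sum and invoking (\ref{con8}). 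Both routes rest on the same two lemmas and are equally rigorous; yours is slightly more symmetric in that it treats the two parities identically, while the paper's reflection trick avoids needing (\ref{con8}) at all in this lemma.
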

\begin{proof} The second congruence in (\ref{con6}) from 
Lemma~\ref{l2.2} and the 
congruence (\ref{con9}) from Lemma~\ref{l2.3} immediately give 
 \begin{eqnarray*}
q_p(2)^2 &\equiv&
\left(\sum_{1\le i\le p-1\atop i\, even}\frac{1}{i}\right)^2
= 2\sum_{1\le i<j\le p-1\atop i\, even,\, j\, even}\frac{1}{ij}
+\sum_{1\le i\le p-1\atop i\, even}\frac{1}{i^2}\pmod{p}\\
&=& 2\sum_{1\le i<j\le p-1\atop i\, even,\, j\, even}\frac{1}{ij}+
\frac 14\sum_{k=1}^{(p-1)/2}\frac{1}{k^2}\equiv 
2\sum_{1\le i<j\le p-1\atop i\, even,\, j\, even}\frac{1}{ij}\pmod{p}.
 \end{eqnarray*}
Further, we have 
  \begin{equation*}
\sum_{1\le i<j\le p-1\atop i\, even,\, j\, even}\frac{1}{ij}
= \sum_{1\le j<i\le p-1\atop j\, odd,\, i\, odd}\frac{1}{(p-i)(p-j)}\\
\equiv \sum_{1\le j<i\le p-1\atop j\, odd,\, i\, odd}\frac{1}{ij}\pmod{p}. 
   \end{equation*}
The above two congruences yield (\ref{con13}).
\end{proof}

\begin{lemma}\label{l2.6}
 Let $p\ge 5$ be a prime. Then 
 \begin{equation}\label{con14}
\sum_{k=1}^{p-1}\frac{(-1)^kH_{k-1}}{k}\equiv
2\sum_{1\le i<j\le p-1\atop  j\, even}\frac{1}{ij}\equiv
q_p(2)^2\pmod{p}.
   \end{equation}
\end{lemma}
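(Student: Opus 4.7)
The plan is to establish the two congruences in \eqref{con14} separately, using the combinatorial identities from Lemma~\ref{l2.4} together with the earlier lemmas and a classical mod-$p^2$ evaluation of $H_{(p-1)/2}$.

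For the first congruence, apply identity \eqref{con12} with $n=(p-1)/2$. Since $H_0=0$, the $k=1$ term on the left vanishes, and the identity rearranges to give
\[
\sum_{k=1}^{p-1}\frac{(-1)^kH_{k-1}}{k} \;=\; 2\sum_{\substack{1\le i<j\le p-1\\ j\text{ even}}}\frac{1}{ij} \;-\; \sum_{1\le i<j\le p-1}\frac{1}{ij}.
\]
It will then suffice to show $\sum_{1\le i<j\le p-1}\frac{1}{ij}\equiv 0\pmod p$, which follows from the elementary identity $2\sum_{i<j}\frac{1}{ij}=H_{p-1}^2-\sum_{k=1}^{p-1}\frac{1}{k^2}$ combined with Wolstenholme's theorem (already invoked in the proof of Lemma~\ref{l2.2}) and congruence \eqref{con8}.

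For the second congruence, write $A+D$ for $\sum_{i<j,\,j\text{ even}}\frac{1}{ij}$ and split this sum by the parity of $i$: the ``both-even'' piece $D$ is handled by Lemma~\ref{l2.5}, which gives $2D\equiv q_p(2)^2\pmod p$. Everything therefore reduces to proving that the ``odd-even'' cross sum $A:=\sum_{i<j,\,i\text{ odd},\,j\text{ even}}\frac{1}{ij}$ vanishes modulo~$p$. To attack this, apply identity \eqref{con11} with $n=(p-1)/2$, rewriting $A+D$ as $\sum_{k=2}^{p-1}(-1)^k\sigma_k$ with $\sigma_k=\sum_{1\le i<j\le k}\frac{1}{ij}$; then Lemma~\ref{l2.1} allows one to replace each $\sigma_k$ by $p^{-2}\bigl((-1)^k\binom{p-1}{k}-1+pH_k\bigr)$ modulo~$p$. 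Using $\sum_{k=0}^{p-1}\binom{p-1}{k}=1+pq_p(2)$, the elementary $\sum_{k=0}^{p-1}(-1)^k=1$, and identity \eqref{con10} in the form $\sum_{k=1}^{p-1}(-1)^kH_k=\tfrac12 H_{(p-1)/2}$, the computation should collapse to
\[
2(A+D)\;\equiv\;\frac{2q_p(2)+H_{(p-1)/2}}{p}\pmod p,
\]
which is well-defined modulo~$p$ by Lemma~\ref{l2.2}.

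The main obstacle will be the final step, because the right-hand side above depends on $H_{(p-1)/2}$ modulo $p^2$, beyond what Lemma~\ref{l2.2} provides. I would invoke the classical \emph{Eisenstein-type congruence}
\[
H_{(p-1)/2}\;\equiv\;-2q_p(2)+pq_p(2)^2\pmod{p^2},
\]
derivable from Morley's theorem $\binom{p-1}{(p-1)/2}\equiv (-1)^{(p-1)/2}4^{p-1}\pmod{p^3}$ together with Lemma~\ref{l2.1} applied at $k=(p-1)/2$; the required intermediate fact $\sigma_{(p-1)/2}\equiv 2q_p(2)^2\pmod p$ follows at once by squaring $H_{(p-1)/2}\equiv -2q_p(2)\pmod p$ and invoking \eqref{con9}. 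Substituting the Eisenstein congruence into the previous display then gives $2(A+D)\equiv q_p(2)^2\pmod p$, completing the proof of the second congruence in \eqref{con14}.
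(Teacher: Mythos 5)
Your proof of the first congruence in \eqref{con14} is exactly the paper's: identity \eqref{con12} with $2n=p-1$ plus the observation that $\sum_{1\le i<j\le p-1}\frac{1}{ij}\equiv 0\pmod p$. For the second congruence, however, you take a genuinely different route. The paper reduces to showing that the cross sum $S=\sum_{i\,\mathrm{odd},\,j\,\mathrm{even}}\frac{1}{ij}$ vanishes mod $p$, and does so by a purely combinatorial symmetry argument: the two bijections $(i,j)\mapsto(j-i,j)$ and $(i,j)\mapsto(i,p-(j-i))$ of the index set yield $3S\equiv 0\pmod p$. You instead evaluate the whole sum $\sum_{j\,\mathrm{even}}\frac{1}{ij}$ directly by running the binomial expansion of $2^{p-1}-1=\sum_k\binom{p-1}{k}$ through Lemma~\ref{l2.1} and identities \eqref{con10}--\eqref{con11}, arriving at $2\sum_{j\,\mathrm{even}}\frac{1}{ij}\equiv\bigl(2q_p(2)+H_{(p-1)/2}\bigr)/p\pmod p$, and then close the loop with the Lehmer congruence $H_{(p-1)/2}\equiv-2q_p(2)+pq_p(2)^2\pmod{p^2}$, which you derive from Morley's theorem. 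This is correct, and your derivation of the Lehmer congruence from Morley plus Lemma~\ref{l2.1} at $k=(p-1)/2$ (with $\sigma_{(p-1)/2}\equiv 2q_p(2)^2$) checks out; there is no circularity because Morley's congruence has independent classical proofs. Two remarks on what is traded away. First, your computation of $2\sum_{j\,\mathrm{even}}\frac{1}{ij}$ is literally the paper's proof of Lemma~\ref{l2.7} read in the opposite direction: the paper proves Lemma~\ref{l2.6} by the bijection trick and then \emph{deduces} the Lehmer congruence (and, in its remarks, Morley's theorem) as corollaries, whereas you \emph{assume} Morley to get Lehmer to get Lemma~\ref{l2.6}. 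If your proof were spliced into the paper, Lemma~\ref{l2.7} would have to be restructured to come first. Second, Morley's mod-$p^3$ congruence is a heavier external input than anything else the paper uses, so the resulting argument is no longer self-contained in the elementary sense the paper advertises. Also, a small stylistic point: once you have $2(A+D)\equiv q_p(2)^2$, that \emph{is} the second congruence of \eqref{con14}; the split into $A$ and $D$ and the appeal to Lemma~\ref{l2.5} are not actually needed in your version (they only serve to recover $A\equiv 0$ a posteriori).
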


\begin{proof} Applying the fact that $p\mid H_{p-1}$ 
and the congruence (\ref{con8}) of Lemma~\ref{l2.3} to the left hand side of  
the identity
  $$
2\sum_{1\le i<j\le p-1}\frac{1}{ij}=
\left(\sum_{k=1}^{p-1}\frac{1}{k}\right)^2-
\sum_{k=1}^{p-1}\frac{1}{k^2},
  $$
we immediately obtain
    \begin{equation}\label{con15}
\sum_{1\le i<j\le p-1}\frac{1}{ij}\equiv 0\pmod{p}.
    \end{equation}
Substituting the  congruence (\ref{con15})  into the identity (\ref{con12}) 
of Lemma~\ref{l2.4} with $2n=p-1$, we obtain the first congruence from (\ref{con14}). 

Further, taking the first congruence of (\ref{con13}) from Lemma~\ref{l2.5},
we obtain
   \begin{equation}\label{con16}\begin{split}
 \sum_{1\le i<j\le p-1\atop   j\, even}\frac{1}{ij}
&=\sum_{1\le i<j\le p-1\atop  i\,even,\, j\, even}\frac{1}{ij}
+\sum_{1\le i<j\le p-1\atop  i\,odd,\, j\, even}\frac{1}{ij}\\
&\equiv q_p(2)^2 + \sum_{1\le i<j\le p-1\atop  i\,odd,\, 
j\, even}\frac{1}{ij}\pmod{p}.
   \end{split}\end{equation}
Hence, it remains to determine 
$S:=\sum_{1\le i<j\le p-1\atop  i\,odd,\, j\, even}\frac{1}{ij}$
modulo $p$.
Let
  $$
A:=\{(i,j):\, 1\le i< j\le p-1, 
i\,\,\mathrm{odd},\,\,j\,\,\mathrm{even}\}.
   $$
Then it is easily seen that the map $f:A\to \Bbb{N}^2$ defined as
 $f(i,j)=(j-i,j)$ is a bijection from $A$ to $A$, 
and thus    
 \begin{equation}\begin{split}\label{con17}
2S&=\sum_{1\le i<j\le p-1\atop  i\,odd,\, j\, even}
\left(\frac{1}{ij}+\frac{1}{(j-i)j}\right)=
\sum_{1\le i<j\le p-1\atop  i\,odd,\, j\, even}
\frac{1}{i(j-i)}\\
&\equiv - \sum_{1\le i<j\le p-1\atop  i\,odd,\, j\, even}
\frac{1}{i(p-(j-i))}:=-S'\pmod{p}.
  \end{split}\end{equation}
Observing also that 
the map $g:A\to \Bbb{N}^2$ defined as
 $g(i,j)=(i,p-(j-i))$ is also a bijection from $A$ to $A$, 
it follows that $S'= S$. Replacing this equality
into (\ref{con17}), we obtain $3S \equiv 0\,(\bmod{\,\,p})$, that is,  
  $$
S=\sum_{1\le i<j\le p-1\atop  i\,odd,\, j\, even}\frac{1}{ij}
\equiv 0 \pmod{p}.
  $$
Substituting this into (\ref{con16}), we obtain the second congruence 
of (\ref{con14}). This completes the proof.
\end{proof}

\begin{lemma}\label{l2.7}
 Let $p\ge 5$ be a prime. Then 
 \begin{equation}\label{con18}
H_{(p-1)/2}\equiv -2q_p(2)+pq_p(2)^2\pmod{p^2}.
   \end{equation}
\end{lemma}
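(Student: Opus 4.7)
The plan is to refine the mod $p$ congruence $H_{(p-1)/2}\equiv -2q_p(2)\pmod p$ from Lemma~\ref{l2.2} to mod $p^2$, using Lemma~\ref{l2.1} (to one more order of $p$) together with the key mod $p$ identity from Lemma~\ref{l2.6}.

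First I would start from the exact identity
\begin{equation*}
2pq_p(2)=2^p-2=\sum_{k=1}^{p-1}\binom{p}{k}
=\sum_{k=1}^{p-1}\frac{p}{k}\binom{p-1}{k-1},
\end{equation*}
which, after cancelling $p$, yields the identity
\begin{equation*}
2q_p(2)=\sum_{k=1}^{p-1}\frac{1}{k}\binom{p-1}{k-1}.
\end{equation*}
Next I would expand each binomial using Lemma~\ref{l2.1} reduced modulo $p^2$, namely
$\binom{p-1}{k-1}\equiv (-1)^{k-1}-(-1)^{k-1}pH_{k-1}\pmod{p^2}$, giving
\begin{equation*}
2q_p(2)\equiv \sum_{k=1}^{p-1}\frac{(-1)^{k-1}}{k}
-p\sum_{k=1}^{p-1}\frac{(-1)^{k-1}H_{k-1}}{k}\pmod{p^2}.
\end{equation*}

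The second sum on the right is only needed modulo $p$, so I would apply Lemma~\ref{l2.6} (with the sign flipped, since $(-1)^{k-1}=-(-1)^k$) to conclude
$\sum_{k=1}^{p-1}(-1)^{k-1}H_{k-1}/k\equiv -q_p(2)^2\pmod p$. Substituting this gives
\begin{equation*}
\sum_{k=1}^{p-1}\frac{(-1)^{k-1}}{k}\equiv 2q_p(2)-pq_p(2)^2\pmod{p^2}.
\end{equation*}
Finally, splitting the alternating harmonic sum into its even and odd parts,
\begin{equation*}
\sum_{k=1}^{p-1}\frac{(-1)^{k-1}}{k}=H_{p-1}-2\!\!\sum_{1\le k\le p-1\atop k\,\mathrm{even}}\!\!\frac{1}{k}
=H_{p-1}-H_{(p-1)/2},
\end{equation*}
and invoking Wolstenholme's theorem (already quoted in the proof of Lemma~\ref{l2.2}) in its stronger form $H_{p-1}\equiv 0\pmod{p^2}$ for $p\ge 5$, I would obtain $-H_{(p-1)/2}\equiv 2q_p(2)-pq_p(2)^2\pmod{p^2}$, which is the claimed congruence (\ref{con18}).

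The only non-routine step is the identification of the mod $p$ sum $\sum(-1)^{k-1}H_{k-1}/k$ with $-q_p(2)^2$, and this is precisely the content of the previously established Lemma~\ref{l2.6}; hence the main obstacle of the lemma has in fact already been handled, and all remaining ingredients (the binomial expansion, Wolstenholme's theorem, and the even/odd splitting of the alternating harmonic sum) are elementary.
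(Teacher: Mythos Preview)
Your proof is correct and follows essentially the same strategy as the paper's: expand a binomial expression for $q_p(2)$ via Lemma~\ref{l2.1}, then invoke Lemma~\ref{l2.6} to identify the order-$p$ correction term as $q_p(2)^2$. The only cosmetic difference is that the paper expands $2^{p-1}-1=\sum_{k=1}^{p-1}\binom{p-1}{k}$ modulo $p^3$ and appeals to the identities (\ref{con10})--(\ref{con11}) of Lemma~\ref{l2.4} to reach the double sum $\sum_{j\ \mathrm{even}}1/(ij)$, whereas you expand $2q_p(2)=\sum_{k=1}^{p-1}\frac{1}{k}\binom{p-1}{k-1}$ modulo $p^2$, which lands directly on the sum $\sum_{k}(-1)^kH_{k-1}/k$ and thus bypasses Lemma~\ref{l2.4} at the cost of invoking Wolstenholme's congruence $H_{p-1}\equiv 0\pmod{p^2}$ at the end.
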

\begin{proof}  After summation of the congruence (\ref{con5}) 
of Lemma~\ref{l2.1} over $k$,  
using the identities (\ref{con10}) and (\ref{con11}) 
from Lemma~\ref{l2.4} with $n=(p-1)/2$,   we find that  
 \begin{eqnarray*}
2^{p-1}-1&=&(1+1)^{p-1}-1=\sum_{k=1}^{p-1}{p-1\choose k}\\
&\equiv& \sum_{k=1}^{p-1}(-1)^k-p\sum_{k=1}^{p-1}(-1)^kH_{k}+
p^2\sum_{k=1}^{p-1}\sum_{1\le i<j\le k}\frac{(-1)^k}{ij}\pmod{p^3}\\
&=& -\frac{p}{2}H_{(p-1)/2}+
p^2\sum_{1\le i<j\le p-1\atop  j\, even}\frac{1}{ij}\pmod{p^3}.
  \end{eqnarray*}
Dividing the above congruence by $p$, we immediately obtain 
  \begin{equation*}
q_p(2)\equiv 
-\frac{1}{2}H_{(p-1)/2}+
p\sum_{1\le i<j\le p-1\atop  j\, even}\frac{1}{ij}\pmod{p^2},
 \end{equation*}
whence substituting the second congruence in (\ref{con14}) from 
Lemma~\ref{l2.6},  we immediately obtain (\ref{con18}).
\end{proof}

\noindent {\bf Remarks.} The congruence 
(\ref{con18}) was proved in 1938 by E. Lehmer 
\cite[the congruence (45), p. 358]{l}. This proof 
followed the method of Glaisher \cite{gl2}, which depends on Bernoulli
polynomials of fractional arguments. 
Using (\ref{con18}) and other similar congruences,
E. Lehmer obtained various criteria for the first case of Fermat Last Theorem
(cf. \cite{r}).
In the conclusion of this paper  \cite[p. 360]{l}
it was  observed that a beautiful Morley's congruence \cite{m}
published in 1895, follows immediately inserting the congruences 
(\ref{con18}) and (\ref{con9}) of Lemma~\ref{l2.3} into 
(\ref{con5}) of Lemma~\ref{l2.1} with $k=(p-1)/2$.
This congruence asserts that  for a prime $p>3$,
      $$
{p-1\choose (p-1)/2}\equiv (-1)^{(p-1)/2}4^{p-1}\pmod{p^3}.
    $$
Notice also that the congruence (\ref{con18}) reduced modulo $p$
asserts that $H_{(p-1)/2}\equiv -2q_p(2)\,(\bmod{\,p})$,
which is the congruence established in 1850 by Eisenstein \cite{e}.   
On the other hand, in 2002 T. Cai  \cite[Theorem 1]{c} generalized 
the congruence (\ref{con18}) to a congruence modulo 
a square of an arbitrary positive integer.

\begin{lemma}\label{l2.8}
 Let $p\ge 5$ be a prime. Then 
 \begin{equation}\label{con19}
\sum_{k=1}^{p-1}(-1)^kH_k^2\equiv
q_p(2)^2\pmod{p^2}
   \end{equation}
and
  \begin{equation}\label{con20}
\sum_{k=1}^{p-1}{p-1\choose k}H_k\equiv
-q_p(2)-\frac 12 pq_p(2)^2\pmod{p^2}.
   \end{equation}
\end{lemma}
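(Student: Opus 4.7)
The plan is to prove (\ref{con19}) first and then deduce (\ref{con20}) in a few lines. For (\ref{con20}), truncating Lemma~\ref{l2.1} to modulo $p^2$ gives $\binom{p-1}{k} \equiv (-1)^k(1 - pH_k) \pmod{p^2}$; multiplying by $H_k$ and summing yields
\[ \sum_{k=1}^{p-1}\binom{p-1}{k}H_k \equiv \sum_{k=1}^{p-1}(-1)^k H_k - p\sum_{k=1}^{p-1}(-1)^k H_k^2 \pmod{p^2}. \]
Identity (\ref{con10}) with $n = (p-1)/2$ replaces the first sum by $\frac{1}{2}H_{(p-1)/2}$, and Lemma~\ref{l2.7} rewrites this as $-q_p(2) + \frac{p}{2}q_p(2)^2$ modulo $p^2$. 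For the second sum, (\ref{con19}) gives $\sum(-1)^k H_k^2 \equiv q_p(2)^2 \pmod{p}$ --- mod-$p$ precision suffices since the term is multiplied by $p$. Substitution delivers (\ref{con20}).

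For (\ref{con19}), I would first establish the exact decomposition
\[ \sum_{k=1}^{p-1}(-1)^k H_k^2 = \frac{1}{4}\sum_{m=1}^{(p-1)/2}\frac{1}{m^2} + 2\sum_{\substack{1 \le i < j \le p-1 \\ j \text{ even}}}\frac{1}{ij} \]
by writing $H_k^2 = \sum_{i=1}^k 1/i^2 + 2\sum_{1\le i<j\le k}1/(ij)$, swapping order of summation (the inner sum $\sum_{k=i}^{p-1}(-1)^k$ equals $1$ when $i$ is even and $0$ when $i$ is odd), and invoking identity (\ref{con11}). Lemma~\ref{l2.3} handles the first sum and Lemma~\ref{l2.6} handles the second, both modulo $p$, which already delivers (\ref{con19}) modulo $p$. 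To lift to modulo $p^2$, I would compute $\sum_{k=0}^{p-1}(-1)^k \binom{p-1}{k}^2$ in two different ways: via the classical closed form $(-1)^{(p-1)/2}\binom{p-1}{(p-1)/2}$ combined with Morley's congruence $\binom{p-1}{(p-1)/2} \equiv (-1)^{(p-1)/2}4^{p-1} \pmod{p^3}$ (which, as indicated in the remarks following Lemma~\ref{l2.7}, is an immediate consequence of Lemma~\ref{l2.1} at $k=(p-1)/2$ together with Lemmas~\ref{l2.3} and \ref{l2.7}), and via squaring Lemma~\ref{l2.1} and summing with the alternating sign, again using identities (\ref{con10}) and (\ref{con11}) to simplify.

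The main obstacle will be this final lift. The two-way Morley calculation directly produces only the mod-$p$ congruence $\sum(-1)^k H_k^2 + 2\sum_{j\text{ even}}1/(ij) \equiv 2q_p(2)^2 \pmod{p}$, so combined with the exact decomposition above it yields only mod-$p$ information about $\sum(-1)^k H_k^2$. Upgrading to modulo $p^2$ requires a subtle cancellation between the mod-$p$ error terms of the two summands in the decomposition. I plan to extract this cancellation by refining the bijection argument in the proof of Lemma~\ref{l2.6} via the expansion $1/(p-x) \equiv -1/x - p/x^2 \pmod{p^2}$ (which, when tracked through, converts the estimate $3S \equiv 0 \pmod{p}$ there into an explicit mod-$p^2$ identity for $S = \sum_{i\,\text{odd},\,j\,\text{even},\,i<j}1/(ij)$), and by leveraging the stronger identity $\sum_{1 \le i<j\le p-1}1/(ij) \equiv -\frac{1}{2}\sum_{k=1}^{p-1}1/k^2 \pmod{p^4}$, which follows at once from Wolstenholme's $H_{p-1}^2 \equiv 0 \pmod{p^4}$.
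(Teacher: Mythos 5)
Your handling of \eqref{con20} is correct and coincides with the paper's: truncate \eqref{con5} to modulo $p^2$, evaluate $\sum(-1)^kH_k$ via \eqref{con10} and Lemma~\ref{l2.7}, and use only the modulo-$p$ form of \eqref{con19} for the term carrying the factor $p$. Your exact decomposition
\[
\sum_{k=1}^{p-1}(-1)^kH_k^2=\frac14\sum_{m=1}^{(p-1)/2}\frac{1}{m^2}+2\sum_{1\le i<j\le p-1\atop j\,\mathrm{even}}\frac{1}{ij}
\]
is also correct and, combined with \eqref{con9} and \eqref{con14}, gives \eqref{con19} modulo $p$; this is essentially the paper's computation (it expands via $H_k=H_{k-1}+1/k$ and \eqref{con12} instead of via $H_k^2$ and \eqref{con11}, with the same ingredients).

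The genuine problem is the step you yourself flag as the main obstacle: the lift of \eqref{con19} to modulo $p^2$. No refinement of the bijection in Lemma~\ref{l2.6}, and no two-way evaluation of $\sum(-1)^k\binom{p-1}{k}^2$ via Morley, can succeed, because \eqref{con19} is false modulo $p^2$ as stated. For $p=7$ one computes
\[
\sum_{k=1}^{6}(-1)^kH_k^2=\frac{2173}{720}\equiv 25\pmod{49},\qquad q_7(2)^2=81\equiv 32\pmod{49},
\]
so the two sides differ by exactly $7$. The underlying reason is that both \eqref{con9} and the second congruence of \eqref{con14} are only modulo-$p$ statements (their modulo-$p^2$ refinements involve $pB_{p-3}$-type terms that do not cancel), so neither summand of your decomposition is controlled modulo $p^2$. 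Note that the paper is in the same position: its proof of \eqref{con19} terminates in a congruence modulo $p$ only, so the modulus $p^2$ in the statement of \eqref{con19} should be read as $p$. Since \eqref{con19} enters \eqref{con20}, and hence the Theorem, only multiplied by $p$, the correct course is to prove \eqref{con19} modulo $p$ and stop there --- which you have already done --- rather than pursue the impossible lift.
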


\begin{proof} The identity $H_k=H_{k-1}+1/k$ gives
 \begin{equation*}\begin{split}
\sum_{k=1}^{p-1}(-1)^kH_k^2
&=\sum_{k=1}^{p-1}(-1)^k\left(H_{k-1}+\frac{1}{k}\right)^2\\
&=\sum_{k=1}^{p-1}(-1)^kH_{k-1}^2+2\sum_{k=1}^{p-1}\frac{(-1)^kH_{k-1}}{k}
+\sum_{k=1}^{p-1}\frac{(-1)^k}{k^2}\\
&=-\sum_{k=1}^{p-1}(-1)^kH_{k}^2+H_{p-1}^2+
2\sum_{k=1}^{p-1}\frac{(-1)^kH_{k-1}}{k}
+\sum_{k=1}^{p-1}\frac{(-1)^k}{k^2},
 \end{split}\end{equation*}
whence
 \begin{equation}\label{con20'}
2\sum_{k=1}^{p-1}(-1)^kH_k^2
=H_{p-1}^2+2\sum_{k=1}^{p-1}\frac{(-1)^kH_{k-1}}{k}
+\sum_{k=1}^{p-1}\frac{(-1)^k}{k^2}.
 \end{equation}
Since 
   $$
\sum_{k=1}^{p-1}\frac{(-1)^k}{k^2}=2\sum_{1\le k\le p-1\atop k\,\,
 even}\frac{1}{k^2}-
\sum_{k=1}^{p-1}\frac{1}{k^2}=\frac{1}{2}
\sum_{k=1}^{(p-1)/2}\frac{1}{k^2}-\sum_{k=1}^{p-1}\frac{1}{k^2},
  $$
taking into this (\ref{con8}) and (\ref{con9}) of Lemma~\ref{l2.3}, 
it follows that  
  \begin{equation}\label{con20''}
 \sum_{k=1}^{p-1}\frac{(-1)^k}{k^2}\equiv 0\pmod{p}.
  \end{equation}
Substituting  the congruences $H_{p-1}\equiv 0\,(\bmod{\,\, p})$, 
 (\ref{con14}) from Lemma~\ref{l2.6} and  (\ref{con20''}) into 
 (\ref{con20'}), we find that
  \begin{equation*}
2\sum_{k=1}^{p-1}(-1)^kH_k^2
=H_{p-1}^2+2\sum_{k=1}^{p-1}\frac{(-1)^kH_{k-1}}{k}
+\sum_{k=1}^{p-1}\frac{(-1)^k}{k^2}
\equiv 2q_p(2)^2\pmod{p}.
  \end{equation*}
This proves the congruence (\ref{con19}).

The congruence (\ref{con5}) from Lemma~\ref{l2.1}
reduced modulo $p^2$,  the identity (\ref{con10}) of 
Lemma~\ref{l2.4}, the congruences (\ref{con19}) 
and (\ref{con18}) of Lemma~\ref{l2.7}  
yield
  \begin{equation}\begin{split}\label{con21}
\sum_{k=1}^{p-1}{p-1\choose k}H_k
&\equiv\sum_{k=1}^{p-1}(-1)^kH_k-p\sum_{k=1}^{p-1}(-1)^kH_k^2\pmod{p^2}\\
&=\frac 12H_{(p-1)/2}-p\sum_{k=1}^{p-1}(-1)^kH_k^2\\
&\equiv \frac 12 (-2q_p(2)+pq_p(2)^2)-pq_p(2)^2\pmod{p^2}\\
&=-q_p(2)-\frac 12 pq_p(2)^2\pmod{p^2}.
    \end{split}\end{equation}
  This is the congruence (\ref{con20}) and the proof is completed.
\end{proof}

Finallly, in order to prove Theorem, 
we still need the following identity 
established in \cite[Eq. (40)]{ps} by using the {\it Sigma} 
package.

\begin{lemma}\label{l2.9}
 For a positive integer $n$ we have
 \begin{equation}\label{con22}
\sum_{k=1}^{n}{n\choose k}H_k=
2^nH_n-2^n\sum_{k=1}^{n}\frac{1}{k\cdot 2^k}.
  \end{equation}
\end{lemma}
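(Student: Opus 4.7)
My plan is to prove the identity by induction on $n$, treating it as a purely combinatorial statement independent of the congruence machinery.

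First, set $S_n:=\sum_{k=1}^{n}\binom{n}{k}H_k$ and $T_n:=2^n H_n-2^n\sum_{k=1}^{n}\frac{1}{k\cdot 2^k}$. The base case $n=1$ is immediate: $S_1=H_1=1$ and $T_1=2-1=1$. For the inductive step I would apply Pascal's identity $\binom{n+1}{k}=\binom{n}{k}+\binom{n}{k-1}$ to split $S_{n+1}$ into two pieces, then shift the index in the second piece and use $H_{k+1}=H_k+\frac{1}{k+1}$. This should yield
\[
S_{n+1}=2S_n+\sum_{k=0}^{n}\binom{n}{k}\frac{1}{k+1}.
\]
The leftover sum I would evaluate by the standard identity $\frac{1}{k+1}\binom{n}{k}=\frac{1}{n+1}\binom{n+1}{k+1}$, giving $\frac{2^{n+1}-1}{n+1}$.

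Next I would verify that $T_n$ satisfies the very same recurrence. Splitting off the $k=n+1$ terms from $H_{n+1}$ and from the inner sum, a direct calculation should produce
\[
T_{n+1}=2T_n+\frac{2^{n+1}}{n+1}-\frac{1}{n+1}=2T_n+\frac{2^{n+1}-1}{n+1},
\]
matching the recurrence for $S_{n+1}$. Since both sequences agree at $n=1$ and obey the same first-order recursion, the identity $S_n=T_n$ follows for all $n\ge 1$.

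I expect no serious obstacle: the argument reduces to bookkeeping with Pascal's identity and the telescoping behavior of $H_k$. The only place one must be slightly careful is in checking that the reindexing $\sum_{k=1}^{n+1}\binom{n}{k-1}H_k=\sum_{k=0}^{n}\binom{n}{k}H_{k+1}$ produces exactly one extra copy of $S_n$ plus the residual $\sum_{k=0}^{n}\binom{n}{k}/(k+1)$; but this is routine. As an alternative, one could bypass induction altogether by inserting the integral representation $H_k=\int_0^1\frac{1-(1-x)^k}{x}\,dx$, exchanging sum and integral, applying the binomial theorem to obtain $\int_0^1\frac{2^n-(2-x)^n}{x}\,dx$, and substituting $y=x/2$ followed by $t=1-y$ to recognize $H_n-\sum_{k=1}^{n}\frac{1}{k\cdot 2^k}$; either route is elementary and avoids the \emph{Sigma} package cited in \cite{ps}.
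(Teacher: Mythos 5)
Your proposal is correct and follows essentially the same route as the paper: induction on $n$ via Pascal's identity, reducing to the recurrence $S_{n+1}=2S_n+\sum_{k=0}^{n}\binom{n}{k}\frac{1}{k+1}=2S_n+\frac{2^{n+1}-1}{n+1}$, which the right-hand side also satisfies. The only cosmetic difference is that you phrase the final step as matching recurrences while the paper substitutes the induction hypothesis directly; the content is identical.
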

\begin{proof}
We proceed by induction on $n\ge 1$. As (\ref{con22}) is trivially satisfied
for $n=1$, we suppose that this is also true for some $n\ge 1$. Then using
the induction hypothesis (in the last equality below), the 
 identities ${n+1\choose k}={n\choose k-1}+{n\choose k}$ 
and $H_k=H_{k-1}+1/k$ with $1\le k\le n+1$, we get
 \begin{eqnarray*}
\sum_{k=1}^{n+1}{n+1\choose k}H_k&=&\sum_{k=1}^{n+1}
\left({n\choose k-1}+{n\choose k} \right)H_k  \\
&=&\sum_{k=1}^{n+1}{n\choose k-1}\left(H_{k-1}+\frac{1}{k}\right)
+\sum_{k=1}^{n+1}{n\choose k}H_k\\
&=&\sum_{k=1}^{n+1}{n\choose k-1}H_{k-1}
+\sum_{k=1}^{n+1}\frac{1}{k}{n\choose k-1}+\sum_{k=1}^{n}{n\choose k}H_k\\
&=&2\sum_{k=1}^{n}{n\choose k}H_{k}
+\sum_{k=1}^{n+1}\frac{1}{k}{n\choose k-1}\\
&=&2^{n+1}H_n-2^{n+1}\sum_{k=1}^{n}\frac{1}{k\cdot 2^k}
+\sum_{k=0}^{n}\frac{1}{k+1}{n\choose k}.
 \end{eqnarray*}
Hence, the induction proof will be finished if we prove that
   $$
 2^{n+1}H_n-2^{n+1}\sum_{k=1}^{n}\frac{1}{k\cdot 2^k}
+\sum_{k=0}^{n}\frac{1}{k+1}{n\choose k}=
2^{n+1}H_{n+1}-2^{n+1}\sum_{k=1}^{n+1}\frac{1}{k\cdot 2^k}.
    $$
Substituting $H_{n+1}=H_n+1/(n+1)$ into above relation, 
it immediately reduces to
    $$
    \sum_{k=0}^{n}\frac{1}{k+1}{n\choose k}=2^{n+1}\left(\frac{1}{n+1}-
\frac{1}{(n+1)2^{n+1}}\right)=\frac{2^{n+1}-1}{n+1}.
   $$
The above equality is well known identity 
(see e.g., \cite[Identity 13, p. 3135]{sp})  of Lemma~\ref{l2.2},
and it can be derived by using  the binomial formula and the identity 
$\frac{1}{n+1}{n+1\choose k}=\frac{1}{k}{n\choose k-1}$ with
$1\le k\le n+1$ as follows.
  \begin{equation*}
\frac{2^{n+1}-1}{n+1}=\frac{1}{n+1}\sum_{k=1}^{n+1}{n+1\choose k}
=\sum_{k=1}^{n+1}\frac{1}{k}{n\choose k-1}=
\sum_{k=0}^{n}\frac{1}{k+1}{n\choose k}.
 \end{equation*}
Thus, the induction proof is completed.
\end{proof}

\begin{proof}[Proof of the Theorem]  
  The identity (\ref{con22}) from Lemma~\ref{l2.9} with $n=p-1$ becomes
 \begin{equation*}
2^{p-1}H_{p-1}-2^{p-1}\sum_{k=1}^{p-1}\frac{1}{k\cdot 2^k}=
\sum_{k=1}^{p-1}{p-1\choose k}H_k.
  \end{equation*}
Substituting the Wolstenholme's congruence 
$H_{p-1}\equiv 0\,(\bmod{\,\, p^2})$ and the congruence (\ref{con20}) 
of Lemma~\ref{l2.8} into above identity, we find that 
  \begin{equation*}
-2^{p-1}\sum_{k=1}^{p-1}\frac{1}{k\cdot 2^k}\equiv
-q_p(2)-\frac 12 pq_p(2)^2\pmod{p^2},
  \end{equation*}
whence we obtain
  \begin{equation*}
\sum_{k=1}^{p-1}\frac{1}{k\cdot 2^k}\equiv
\frac{q_p(2)+\frac 12 pq_p(2)^2}{2^{p-1}}=
\frac{q_p(2)+\frac 12 pq_p(2)^2}{1+pq_p(2)}\pmod{p^2},
  \end{equation*}
which in view of the fact that  
$1/(1+pq_p(2))\equiv 1-pq_p(2)\,(\bmod{\,\, p^2})$, gives 
 \begin{equation*}
\sum_{k=1}^{p-1}\frac{1}{k\cdot 2^k}\equiv
\left(q_p(2)+\frac 12 pq_p(2)^2\right)(1-pq_p(2))\equiv 
q_p(2)-\frac{p}{2} q_p(2)^2\pmod{p^2}.
  \end{equation*}
This is the desired congruence (\ref{con3}).
\end{proof}


\begin{thebibliography}{99} 
\bibitem{a} E. Alkan,  Variations on Wolstenholme's theorem,
{\it Amer. Math. Monthly} {\bf 101} (1994), 1001-1004. 

\bibitem{b} M. Bayat,  A generalization of Wolstenholme's Theorem,
{\it Amer. Math. Monthly} {\bf 104} (1997), 557--560. 


\bibitem{c} T. Cai,  A congruence involving the quotients
of Euler and its applications (I),
{\it Acta Arithmetica} {\bf 103} (2002), 313--320. 


\bibitem{cd} R. Crandall, K. Dilcher and C. Pomerance,
A search for Wieferich and Wilson primes,
{\it Math. Comp.} {\bf 66} (1997), 443--449. 

\bibitem{e}  G. Eisenstein, Eine neue Gattung zahlentheoretischer
Funktionen, welche von zwei Elementen abh\"{a}ngen und durch gewisse 
lineare Funktional-Gleichungen definiert werden,  {\it Bericht. K. Pruss. 
Akad. Wiss.} 1850, 36--42; see also G. Eisenstein, {\it Mathematische
Werke}, Vol. II, Chelsea, 1975, 705--711.    


\bibitem{em}  R. Ernvall and T. 
Mets\"{a}nkyl\"{a}, On the $p$-divisibylity of Fermat quotients,
{\it Math. Comp.} {\bf 66} (1997), 1353--1365. 


\bibitem{gl} J. W. L. Glaisher,  {\it On the residues of the sums of 
products of the first $p-1$ numbers, and their powers, to modulus $p^2$
or $p^3$},  Q.  J. Math. {\bf 31} (1900), 321--353. 


\bibitem{gl2} J. W. L. Glaisher,  On the residues of the sums of the inverse 
powers of numbers in arithmetical progression,
{\it Q.  J. Math.} {\bf 32} (1901), 271--305. 


\bibitem{gr1} A. Granville,  {\it Some conjectures related to 
Fermat's Last Theorem}, Number Theory (Banff, AB, 1988),
 de Gruyter, Berlin, 1990, 177--192.


\bibitem{gr2} A. Granville, The square of the Fermat quotient,
 {\it Integers},  {\bf 4} (2004), \# A22.

\bibitem{gr3} A. Granville,  Arithmetic properties of binomial coefficients. I.
Binomial coefficients modulo prime powers, in {\it Organic 
Mathematics--Burnaby, BC 1995},  CMS Conf. Proc., vol. 20, 
American  Mathematical Society, Providence, RI, 1997, 253-276.


\bibitem{k} W. Kohnen, A simple congruence modulo $p$,
{\it Amer. Math. Monthly} {\bf 104} (1997), 444--445. 


\bibitem{k2} W. Kohnen, Some congruences modulo primes,
{\it Monatsh.  Math.} {\bf 127} (1999), 321--324. 


\bibitem{l} E. Lehmer, On congruences 
involving Bernoulli numbers and the quotients of
Fermat and Wilson,  {\it Ann. Math.} {\bf 39} (1938), 350--360. 


\bibitem{mc} R. J. McIntosh,  On the converse of 
 Wolstenholme's theorem, {\it Acta Arith.}  {\bf 71} (1995), 381--389. 


\bibitem{mr} R. J. McIntosh and E. L. Roettger, A search for 
Fibonacci-Wieferich and Wolstenholme primes, {\it Math. Comp.} 
{\bf 76} (2007), 2087--2094.

\bibitem{m}  F. Morley, Note on the congruence 
$2^{4n}\equiv (-1)^n(2n)!/(n!)^2$, where $2n+1$ is a prime,
{\it Ann. Math.} {\bf 9} (1895), 168--170.


\bibitem{ps}  P. Paule and C. Schneider, Computer proofs
of a new family of harmonic number identities,
Adv. in Appl. Math. {\bf 31} (2003), 359--378.


\bibitem{r} P. Ribenboim, 13  {\it Lectures on Fermat's Last Theorem, 
Springer-Verlag}, New York, Heidelberg, Berlin, 1979.

\bibitem{sp} M. Z.  Spivey, Combinatorial sums and finite differences, 
{\it Discrete Math.} {\bf 307} (2007), 3130--3146.

\bibitem{s1} Z. H. Sun, Congruences concerning Bernoulli numbers
and Bernoulli polynomials,  {\it Discrete Appl. Math.} {\bf 105} (2000),
193--223.

\bibitem{s2} Z. H. Sun, Congruences involving Bernoulli and Euler numbers, 
{\it J. Number Theory}, {\bf 128} (2008), 280--312.

\bibitem{s2'} Z. W. Sun, A congruence for primes, 
{\it Proc. Amer. Math. Soc.} {\bf 123} (1995), 1341--1346.


\bibitem{s3} Z. W. Sun, Binomial coefficients, Catalan numbers 
and Lucas quotients, {\it Sci. China Math.} {\bf 53} (2010), 2473--2488;
preprint {\tt arXiv:0909.5648v11 [math.NT]} (2010).

\bibitem{s8} Z. W. Sun, On Delannoy numbers and Schr\"{o}der numbers,
{\it J. Number Theory} {\bf 131} (2011), 2387--2397;
preprint {\tt arXiv:1009.2486v4 [math.NT]} (2011).


\bibitem{s4} Z. W. Sun, Super congruences and Euler numbers,
 {\it Sci. China Math.} {\bf 54} (2011), article in press, 
preprint {\tt arXiv:1001.4453v19 [math.NT]} (2011). 


\bibitem{s6} Z. W. Sun, A refinement of the Hamme--Mortenson congruence,
preprint {\tt arXiv:1011.1902v5 [math.NT]} (2011). 


\bibitem{s7} Z. W. Sun, On congruences related to central 
binomial coefficients, {\it J. Number Theory} {\bf 131} (2011), 2219--2238;
preprint {\tt arXiv:0911.2415v16 [math.NT]} (2011).

\bibitem{s5} Z. W. Sun,  Arithmetic theory of harmonic  numbers,  
{\it Proc. Amer. Math. Soc.}, article in press, S 0002-9939(2011)10925-0;
preprint {\tt arXiv:0911.4433v6 [math.NT]} (2009).


\bibitem{w} J. W{\scriptsize olstenholme}, On certain properties
of prime numbers, {\it Quart. J. Pure Appl. Math.} {\bf 5} (1862), 35--39. 


\end{thebibliography}
\end{document}